\newtheorem{Theorem}{Theorem}[section]
\theoremstyle{definition}
\numberwithin{equation}{section}
\newcommand{\mR}{\mathbb{R}}                    
\newcommand{\abs}[1]{\lvert #1 \rvert}          
\newcommand{\norm}[1]{\lVert #1 \rVert}         
\newcommand{\ol}[1]{\overline{#1}}
\newcommand{\mF}{\mathscr{F}}
\newcommand{\eps}{\varepsilon}
\newcounter{sidenote}
\begin{document}

\title{The fractional Calder{\'o}n problem} 

\author[M. Salo]{Mikko Salo}
\address{University of Jyvaskyla, Department of Mathematics and Statistics, PO Box 35, 40014 University of Jyvaskyla, Finland}
\email{mikko.j.salo@jyu.fi}




\begin{abstract}
We review recent progress in the fractional Calder\'on problem, where one tries to determine an unknown coefficient in a fractional Schr\"odinger equation from exterior measurements of solutions. This equation enjoys remarkable uniqueness and approximation properties, which turn out to yield strong results in related inverse problems.
\end{abstract}

\maketitle

\section{Introduction} \label{sec_introduction}

In this expository note, we will discuss recent results for a fractional version of the inverse problem of Calder\'on. Let $0 < s < 1$, and denote by $(-\Delta)^s$ the fractional Laplacian in $\mR^n$ defined by 
\[
(-\Delta)^s u = \mF^{-1}\{ \abs{\xi}^{2s} \hat{u}(\xi) \}
\]
where $\mF u = \hat{u}$ is the Fourier transform of $u$. Observe that the fractional Laplacian is a nonlocal operator: the support of $(-\Delta)^s u$ can be much larger than the support of $u$, and computing $(-\Delta)^s u(x)$ at some point $x \in \mR^n$ requires knowledge of the values of $u$ in all of $\mR^n$. 

Let $\Omega \subset \mR^n$ be a bounded Lipschitz domain and let $q \in L^{\infty}(\Omega)$. Consider solutions $u \in H^s(\mR^n)$, where $H^s$ denotes the standard $L^2$-based Sobolev space, of the fractional Schr\"odinger equation 
\[
\left\{ \begin{array}{rcl}
((-\Delta)^s + q)u &\!\!\!=\!\!\!& 0 \text{ in $\Omega$}, \\[5pt]
u|_{\Omega_e} &\!\!\!=\!\!\!& f
\end{array} \right.
\]
where $\Omega_e = \mR^n \setminus \overline{\Omega}$ is the exterior domain. We assume that $0$ is not an exterior Dirichlet eigenvalue, i.e., 
\begin{equation} \label{dirichlet_uniqueness}
\left\{ \begin{array}{c} \text{if $u \in H^s(\mR^n)$ solves $((-\Delta)^s + q)u = 0$ in $\Omega$ and $u|_{\Omega_e} = 0$,} \\
\text{then $u \equiv 0$.} \end{array} \right.
\end{equation}
This holds e.g.\ if $q \geq 0$. Then there is a unique solution $u \in H^s(\mR^n)$ for any $f \in H^s(\Omega_e)$ (see e.g.\ \cite{GhoshSaloUhlmann}).

We assume that we have access to measurements of solutions outside $\Omega$. The inverse problem will be determine an unknown potential $q$ in $\Omega$ from these measurements. The boundary measurements will be encoded by the exterior Dirichlet-to-Neumann map (DN map for short),
\[
\Lambda_q: H^s(\Omega_e) \to H^s(\Omega_e)^*
\]
that maps $f$ to a nonlocal analogue of the Neumann boundary value of the solution $u$. Formally $\Lambda_q f = (-\Delta)^s u|_{\Omega_e}$. (See \cite{GhoshSaloUhlmann} for a more precise treatment, also in the case where $\Omega$ is a general bounded open set.)

The following result states that exterior measurements, even on arbitrary, possibly disjoint subsets of $\Omega_e$, uniquely determine the potential in $\Omega$.

\begin{Theorem}{\cite{GhoshSaloUhlmann}} \label{thm_main}
Let $\Omega \subset \mR^n$, $n \geq 1$, be bounded open, let $0 < s < 1$, and let $q_1, q_2 \in L^{\infty}(\Omega)$ satisfy \eqref{dirichlet_uniqueness}. Let also $W_1, W_2 \subset \Omega_e$ be open. If the DN maps for the equations $((-\Delta)^s + q_j) u = 0$ in $\Omega$ satisfy 
\[
\Lambda_{q_1} f|_{W_2} = \Lambda_{q_2} f|_{W_2} \text{ for any $f \in C^{\infty}_c(W_1)$,}
\]
then $q_1 = q_2$ in $\Omega$.
\end{Theorem}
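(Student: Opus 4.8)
The plan is to follow the by-now-standard three-step scheme for this problem: a bilinear (Alessandrini-type) identity, a Runge-type approximation theorem, and the strong unique continuation property of the fractional Laplacian.

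\textbf{Step 1: a bilinear identity.}
Let $B_q(v,w) = \int_{\mR^n} (-\Delta)^{s/2}v\,(-\Delta)^{s/2}w\,dx + \int_\Omega q vw\,dx$ be the bilinear form of $(-\Delta)^s + q$, and recall from \cite{GhoshSaloUhlmann} that $\Lambda_q$ is characterised by $\langle \Lambda_q f, g\rangle = B_q(u_f, g^*)$ for any $g^* \in H^s(\mR^n)$ with $g^*|_{\Omega_e} = g$, and that $\Lambda_q$ is symmetric. Given $f_1 \in C^\infty_c(W_1)$ and $f_2 \in C^\infty_c(W_2)$, let $u_j \in H^s(\mR^n)$ be the (unique, by \eqref{dirichlet_uniqueness}) solution of $((-\Delta)^s + q_j)u_j = 0$ in $\Omega$, $u_j|_{\Omega_e} = f_j$. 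Using $u_2$ as an extension of $f_2$, $u_1$ as an extension of $f_1$, and the symmetry of $\Lambda_q$ and of $B_q$, I would obtain
\[
\langle (\Lambda_{q_1} - \Lambda_{q_2}) f_1, f_2\rangle = B_{q_1}(u_1, u_2) - B_{q_2}(u_1, u_2) = \int_\Omega (q_1 - q_2)\, u_1 u_2 \, dx .
\]
Pairing the hypothesis $\Lambda_{q_1} f_1|_{W_2} = \Lambda_{q_2} f_1|_{W_2}$ against $f_2 \in C^\infty_c(W_2)$ makes the left-hand side vanish, so $\int_\Omega (q_1 - q_2)\, u_1 u_2 \, dx = 0$ for every such pair of solutions.

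\textbf{Step 2: Runge approximation.}
I claim that for $j=1,2$ the set $\mathcal{R}_j = \{\, u_f|_\Omega : f \in C^\infty_c(W_j),\ u_f \text{ the solution for } q_j \text{ with exterior data } f \,\}$ is dense in $L^2(\Omega)$. By Hahn--Banach it suffices to show that $v \in L^2(\Omega)$ with $v \perp \mathcal{R}_j$ must vanish. Using \eqref{dirichlet_uniqueness}, solve the dual problem for $\phi \in \widetilde{H}^s(\Omega)$ (the closure of $C^\infty_c(\Omega)$ in $H^s(\mR^n)$): $((-\Delta)^s + q_j)\phi = v$ in $\Omega$ and $\phi|_{\Omega_e} = 0$. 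For $f \in C^\infty_c(W_j)$ let $F \in C^\infty_c(W_j)$ be its zero extension, so that $u_f - F \in \widetilde{H}^s(\Omega)$; testing the weak formulation of the dual problem against $u_f - F$, using $B_{q_j}(u_f, \cdot) = 0$ on $\widetilde{H}^s(\Omega)$, $\supp F \subset W_j$ and $F|_\Omega = 0$, gives
\[
\int_\Omega v\, u_f \, dx = B_{q_j}(u_f - F, \phi) = - B_{q_j}(F, \phi) = - \langle (-\Delta)^s \phi|_{W_j}, F\rangle .
\]
If $v \perp \mathcal{R}_j$ this vanishes for every $f$, hence $(-\Delta)^s \phi = 0$ in $W_j$; moreover $\phi = 0$ in $W_j$ already, since $W_j \subset \Omega_e$. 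By the strong unique continuation property of $(-\Delta)^s$, $\phi \equiv 0$ in $\mR^n$, and therefore $v = ((-\Delta)^s + q_j)\phi|_\Omega = 0$.

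\textbf{Step 3: conclusion.}
Fix $g \in C^\infty_c(\Omega)$. By Step 2 choose $u_1^{(k)} \in \mathcal{R}_1$ with $u_1^{(k)} \to g$ and $u_2^{(k)} \in \mathcal{R}_2$ with $u_2^{(k)} \to 1$ in $L^2(\Omega)$ (here $1 \in L^2(\Omega)$ since $\Omega$ is bounded). Inserting the pairs $(u_1^{(k)}, u_2^{(k)})$ into the identity of Step 1 and letting $k \to \infty$, the products $u_1^{(k)} u_2^{(k)}$ converge to $g$ in $L^1(\Omega)$ while $q_1 - q_2 \in L^\infty(\Omega)$, so $\int_\Omega (q_1 - q_2)\, g\, dx = 0$. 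As $g$ was arbitrary, $q_1 = q_2$ in $\Omega$.

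\textbf{Where the difficulty lies.}
Steps 1 and 3 and the well-posedness statements are soft, once the functional-analytic framework of \cite{GhoshSaloUhlmann} (valid for general bounded open $\Omega$) is set up; the nonlocality of $(-\Delta)^s$ is in fact an advantage here, since it lets the exterior data $f$ be supported in an arbitrary open $W_j$. The genuine analytic core is the strong unique continuation property invoked in Step 2: if a function and its fractional Laplacian both vanish on a nonempty open set, the function is identically zero. I would prove this via the Caffarelli--Silvestre extension, realising $\phi$ as the trace on $\{y=0\}$ of a solution $\widetilde\phi$ of the degenerate elliptic equation $\mathrm{div}(y^{1-2s}\nabla \widetilde\phi) = 0$ in $\mR^{n+1}_+$ whose Dirichlet trace is $\phi$ and whose weighted conormal derivative $-\lim_{y\to 0^+} y^{1-2s}\partial_y \widetilde\phi$ is a nonzero multiple of $(-\Delta)^s\phi$. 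Vanishing of both of these Cauchy data on $W_j \times \{0\}$ then has to be propagated, via an Almgren frequency function argument or a Carleman estimate adapted to the weight $y^{1-2s}$ (an $A_2$ Muckenhoupt weight, degenerate or singular at $y=0$), to conclude $\widetilde\phi \equiv 0$ near a boundary point and hence, by interior unique continuation, everywhere. Controlling the degeneracy of the weight at $y=0$ in that boundary unique continuation step is the delicate point, and is the one place where serious PDE machinery enters.
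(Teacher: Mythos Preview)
Your proposal is correct and follows essentially the same approach as the paper: the Alessandrini-type integral identity, the Runge approximation proved via Hahn--Banach and a dual solution $\phi$, the strong uniqueness for $(-\Delta)^s$ reduced to unique continuation for the Caffarelli--Silvestre extension, and the conclusion by approximating $g$ and $1$ in $L^2(\Omega)$. The only cosmetic difference is that you inline the proof of the approximation property (the paper states it as a separate theorem) and test against $g\in C^\infty_c(\Omega)$ rather than $v\in L^2(\Omega)$, which is equivalent.
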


This theorem is a fractional version of uniqueness results in the classical inverse problem of Calder\'on (see \cite{Uhlmann_survey} for many results and references), where $s=1$ and measurements are taken on $\partial \Omega$. We note that the fractional problem, where $0 < s < 1$, has several interesting features when compared to the standard Calder\'on problem:
\begin{itemize}
\item 
The same method proves Theorem \ref{thm_main} in all dimensions $n \geq 1$, whereas in the standard Calder\'on problem one often needs different methods for $n=2$ and $n \geq 3$ (and uniqueness fails for $n=1$).
\item 
Theorem \ref{thm_main} proves uniqueness with measurements in arbitrarily small, possibly disjoint sets in the exterior. The standard Calder\'on problem with measurements on an arbitrary subset of the boundary is still open in dimensions $n \geq 3$, and the case of disjoint sets may be even more difficult (see \cite{DaudeKamranNicoleau} and references therein).
\item 
The proof is based on remarkable uniqueness and approximation properties of the fractional Schr\"odinger equation (see Section \ref{sec_tools}). These replace the method of complex geometrical optics solutions that is typical in the standard Calder\'on problem.
\end{itemize}

The above facts suggest that the fractional Calder\'on problem is more manageable than the classical problem, and one could hope for a fairly complete understanding of this inverse problem. Heuristically, this is also explained by a formal variable count: one tries to determine a function of $n$ variables (the potential $q$) from data that depends on $2n$ variables (the Schwartz kernel of the exterior DN map $\Lambda_{q}$). This makes the fractional inverse problem formally overdetermined in any dimension $n \geq 1$.

\subsection*{Extensions}

Theorem \ref{thm_main} has already been extended in several directions:
\begin{enumerate}
\item 
{\it Low regularity.} Uniqueness has been proved in \cite{RulandSalo_stability} for a large class of low regularity potentials, including potentials in $L^{\frac{n}{2s}}(\Omega)$ (the scale invariant $L^p$ space for this equation) or potentials in $W^{-s,\frac{n}{s}}(\Omega)$ that vanish near the boundary.
\item 
{\it Stability.} The work \cite{RulandSalo_stability} also gives a quantitative version of Theorem \ref{thm_main}, showing that this inverse problem enjoys logarithmic stability. One of the results in \cite{RulandSalo_stability} states that if $\Omega$ is smooth and if one has the a priori bound $\norm{q_j}_{W^{\delta,\frac{n}{2s}}} \leq M$ for some $\delta > 0$, then 
\[
\norm{q_1-q_2}_{L^{\frac{n}{2s}}(\Omega)} \leq \omega(\norm{\Lambda_{q_1}-\Lambda_{q_2}}_*)
\]
where $\omega$ is a logarithmic modulus of continuity and $\norm{\,\cdot\,}_*$ is the natural norm for the exterior DN map. In \cite{RulandSalo_instability} this type of stability is proved to be optimal, showing that the fractional inverse problem is in general highly ill-posed. \\
\item 
{\it Reconstruction.} Constructive procedures for recovering $q$ from $\Lambda_q$ are presented in \cite{GRSU}, even in the case of a {\it single} measurement (a related result for obstacles is in \cite{CLL}), and in the work \cite{HarrachLin} that involves monotonicity methods and shape reconstruction.
\item 
{\it Anisotropic problem.} The work \cite{GhoshLinXiao} proves a version of Theorem \ref{thm_main}, where the operator $(-\Delta)^s + q$ is replaced by $(-\mathrm{div}(A \nabla \,\cdot\,))^s + q$ where $A \in C^{\infty}(\mR^n, \mR^{n \times n})$ is a given uniformly elliptic matrix function. The corresponding result for $s=1$ is open when $n \geq 3$.
\item 
{\it Semilinear equations.} A version of Theorem \ref{thm_main} that applies to semilinear equations $(-\Delta)^s u + q(x,u) = 0$ is proved in \cite{LaiLin}.
\end{enumerate}

\subsection*{Background}

The study of fractional and nonlocal operators is currently an active research field and the related literature is substantial. We only mention that operators of this type arise in problems involving anomalous diffusion and random processes with jumps, and they have applications in probability theory, physics, finance, and biology. See \cite{BucurValdinoci, RosOton} for further information and references.

The mathematical study of inverse problems for fractional equations goes back at least to \cite{CNYY}. By now there are a number of results, largely for time-fractional models and including many numerical works. Here is an example of the rigorous results that are available \cite{SakamotoYamamoto}: in the time-fractional heat equation 
\[
\partial_t^{\alpha} u - \Delta u = 0 \text{ in $\Omega \times (0,T)$,} \qquad u|_{\partial \Omega \times (0,T)} = 0,
\]
where $0 < \alpha < 1$ and $\partial_t^{\alpha}$ is the Caputo derivative, $u(0)$ is determined by $u(T)$ in a mildly ill-posed way (for $\alpha=1$ this problem is severely ill-posed). In general, nonlocality may influence the nature of the inverse problem but there are several aspects to be taken into account. We refer to \cite{JinRundell} for a detailed discussion and many further references. 

This article is organized as follows. Section \ref{sec_introduction} is the introduction. In Section \ref{sec_tools} we describe the main tools, namely the strong uniqueness and approximation properties of the fractional equation, that are used in solving the inverse problem. Section \ref{sec_proofs} contains sketches of proofs of the main results.

\subsection*{Acknowledgements}

The author is supported by the Academy of Finland (Finnish Centre of Excellence in Inverse Problems Research, grant numbers 284715 and 309963) and an ERC Starting Grant (grant number 307023).

\section{Tools} \label{sec_tools}

The proof of Theorem \ref{thm_main} begins by showing that if the two DN maps are equal, then (exactly as in the usual Calder\'on problem) one has the integral identity 
\[
\int_{\Omega} (q_1-q_2) u_1 u_2 \,dx = 0
\]
for any $u_j \in H^s(\mR^n)$ that solve $((-\Delta)^s + q_j) u_j = 0$ in $\Omega$ and satisfy $\mathrm{supp}(u_j) \subset \overline{\Omega} \cup \overline{W}_j$. For the standard Schr\"odinger equation, one then typically uses special complex geometrical optics solutions $u_j$ to show that the products $\{Êu_1 u_2 \}$ form a complete set in $L^1(\Omega)$. See \cite{Uhlmann_survey} for an overview.

However, solutions of the fractional Schr\"odinger equation are much less rigid than those of the usual Schr\"odinger equation. The fractional equation enjoys stronger uniqueness and approximation properties:

\begin{Theorem}{\cite{GhoshSaloUhlmann}} \label{thm_uniqueness}
If $0 < s < 1$, if $u \in H^{-r}(\mR^n)$ for some $r \in \mR$, and if both $u$ and $(-\Delta)^s u$ vanish in some open set, then $u \equiv 0$.
\end{Theorem}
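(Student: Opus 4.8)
The plan is to exploit the fact that the fractional Laplacian $(-\Delta)^s$ is (up to the Poisson-type extension) a Dirichlet-to-Neumann map for a degenerate elliptic equation in one extra dimension, and then to invoke a strong unique continuation principle for that extension. Concretely, I would use the Caffarelli--Silvestre characterization: given $u \in H^{-r}(\mR^n)$, let $U(x,y)$ for $(x,y) \in \mR^{n+1}_+ = \mR^n \times (0,\infty)$ be the solution of
\[
\mathrm{div}(y^{1-2s} \nabla U) = 0 \text{ in } \mR^{n+1}_+, \qquad U(\cdot,0) = u,
\]
so that $U$ solves a degenerate elliptic equation with an $A_2$-Muckenhoupt weight $y^{1-2s}$, and the (normalized) weighted normal derivative satisfies $-\lim_{y \to 0^+} y^{1-2s}\partial_y U = c_s (-\Delta)^s u$ as distributions on $\mR^n$. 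Low regularity of $u$ is not a genuine obstruction here: since $(-\Delta)^s u$ vanishes on an open set, $u$ is automatically smooth there (the equation $(-\Delta)^s u = 0$ is locally elliptic in that open set modulo a nonlocal error that is smooth), so $U$ and its relevant traces make sense classically near the relevant piece of the boundary.

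The key step is the following. Let $V \subset \mR^n$ be the nonempty open set where both $u$ and $(-\Delta)^s u$ vanish. Then on the boundary piece $V \times \{0\}$, the extension $U$ has vanishing Dirichlet data ($U(\cdot,0) = u = 0$ on $V$) and vanishing weighted Neumann data ($y^{1-2s}\partial_y U \to 0$ on $V$). I would then extend $U$ across the hyperplane $\{y=0\}$ in a neighbourhood of $V$ by an appropriate reflection — for the weight $y^{1-2s}$ the even reflection $U(x,-y) := U(x,y)$ produces a function that, because both the Dirichlet and the weighted conormal traces vanish on $V$, is a weak solution of the degenerate equation $\mathrm{div}(|y|^{1-2s}\nabla U) = 0$ in a full two-sided neighbourhood $\mathcal{O} \subset \mR^{n+1}$ of $V \times \{0\}$. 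This is the crucial junction: matching both traces is exactly what is needed for the reflected function to be a genuine solution across the weight's degeneracy locus.

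Now I would apply the strong unique continuation property (SUCP) for degenerate elliptic equations with $A_2$ weights of this precise form — this is available in the literature (e.g.\ via the Almgren-type frequency function monotonicity developed for these Caffarelli--Silvestre extensions, as in the work of Fall--Felli and others). Since $U$ vanishes to infinite order at any interior point of $V \times \{0\} \subset \mathcal{O}$ (indeed $U \equiv 0$ on the $n$-dimensional piece $V\times\{0\}$, which forces infinite-order vanishing), SUCP gives $U \equiv 0$ in the connected component of $\mathcal{O}$ containing that point, hence $U \equiv 0$ on an open subset of $\mR^{n+1}_+$. Finally I would propagate this: $U$ solves the degenerate elliptic equation in all of $\mR^{n+1}_+$, which is connected, so the (weak) unique continuation principle forces $U \equiv 0$ in $\mR^{n+1}_+$, and taking the trace at $y=0$ yields $u \equiv 0$.

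The main obstacle I expect is the low-regularity hypothesis $u \in H^{-r}(\mR^n)$: one must make sense of the Caffarelli--Silvestre extension, its conormal trace, and the reflection argument for such rough data, and verify that the frequency-function machinery for SUCP applies. The cleanest route is to first use the local ellipticity on $V$ to upgrade $u$ to be smooth there, so that all the delicate trace and reflection statements are needed only near $V$ where everything is regular; the rough behaviour of $u$ elsewhere is then harmless because the final propagation step only uses the (weak) unique continuation for the extension $U$, not any pointwise information about $u$ away from $V$. A secondary technical point is citing or stating the SUCP for the weighted operator in the exact generality needed (all $0<s<1$, i.e.\ the full range of exponents $1-2s \in (-1,1)$ of $A_2$ weights).
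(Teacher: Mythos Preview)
Your proposal shares the paper's starting point --- the Caffarelli--Silvestre extension, which converts the nonlocal uniqueness statement into a unique continuation problem for the degenerate elliptic operator $\mathrm{div}(x_{n+1}^{1-2s}\nabla\,\cdot\,)$ with vanishing Cauchy data on $W\times\{0\}$ --- but diverges at the unique continuation step. The paper follows the quantitative argument of \cite{RulandSalo_stability}: a boundary Carleman/interpolation inequality transfers smallness of the Cauchy data to smallness of $w$ in $W\times(0,1)$; this is then propagated into $\Omega\times(h,1)$ via a chain-of-balls argument and three-balls inequalities; finally a localized trace estimate combined with an optimization in $h$ closes the argument. Your route instead reflects the extension across $\{y=0\}$ and invokes the strong unique continuation principle via Almgren-type frequency monotonicity. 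Both are legitimate; the paper's approach yields the quantitative control needed for the stability theorems cited there, while yours is closer in spirit to the original qualitative argument in \cite{GhoshSaloUhlmann}.

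There is, however, a genuine gap in the execution. You assert that ``$U\equiv 0$ on the $n$-dimensional piece $V\times\{0\}$, which forces infinite-order vanishing'' at points of that piece. This is false as stated: vanishing on a codimension-one hypersurface does not imply infinite-order vanishing at a point (e.g.\ $y^2$ near $\{y=0\}$), and the even symmetry in $y$ only kills the odd-order normal derivatives. A clean fix is to use \emph{both} reflections simultaneously: since the Dirichlet trace vanishes on $V$, the odd reflection is a weak solution across $V\times\{0\}$; since the weighted Neumann trace also vanishes, the even reflection is a weak solution as well; their difference is then a weak solution of $\mathrm{div}(|y|^{1-2s}\nabla\,\cdot\,)=0$ that vanishes identically for $y>0$, and \emph{weak} unique continuation for this $A_2$-weighted operator forces it to vanish everywhere, giving $U\equiv 0$ on an open subset of $\mR^{n+1}_+$, after which your propagation step applies. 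Alternatively, skip the reflection altogether and cite directly the boundary unique continuation results in \cite{Ruland} or \cite{FallFelli}, which are formulated precisely as ``vanishing Cauchy data on an open flat boundary piece implies the extension vanishes'' and do not pass through an interior infinite-order vanishing claim.
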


\begin{Theorem}{\cite{GhoshSaloUhlmann}} \label{thm_approximation}
Let $\Omega \subset \mR^n$ be a bounded open set, and let $\Omega_1 \subset \mR^n$ be any open set with $\Omega \subset \Omega_1$ and $\Omega_1 \setminus \overline{\Omega} \neq \emptyset$.
\begin{itemize}
\item[(a)] 
If $q \in L^{\infty}(\Omega)$ satisfies \eqref{dirichlet_uniqueness}, then any $f \in L^2(\Omega)$ can be approximated arbitrarily well in $L^2(\Omega)$ by functions $u|_{\Omega}$ where $u \in H^s(\mR^n)$ satisfy 
\[
((-\Delta)^s + q) u = 0 \text{ in $\Omega$}, \qquad \mathrm{supp}(u) \subset \ol{\Omega}_1.
\]
\item[(b)] 
If $\Omega$ has $C^{\infty}$ boundary, and if $q \in C^{\infty}_c(\Omega)$ satisfies \eqref{dirichlet_uniqueness}, then any $f \in C^{\infty}(\ol{\Omega})$ can be approximated arbitrarily well in $C^{\infty}(\ol{\Omega})$ by functions $d(x)^{-s} u|_{\Omega}$ where $u \in H^s(\mR^n)$ satisfy 
\[
((-\Delta)^s + q) u = 0 \text{ in $\Omega$}, \qquad \mathrm{supp}(u) \subset \ol{\Omega}_1.
\]
(Here $d$ is any function in $C^{\infty}(\ol{\Omega})$ with $d(x) = \mathrm{dist}(x,\partial \Omega)$ near $\partial \Omega$ and $d > 0$ in $\Omega$. Also, $v_j \to v$ in $C^{\infty}(\ol{\Omega})$ means that $v_j \to v$ in $C^k(\ol{\Omega})$ for all $k \geq 0$.)
\end{itemize}
\end{Theorem}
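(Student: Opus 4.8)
The plan is to prove part (a) first and then bootstrap to part (b). For part (a), I would use a Hahn--Banach/duality argument. The set of restrictions $u|_\Omega$, where $u \in H^s(\mathbb{R}^n)$ solves $((-\Delta)^s + q)u = 0$ in $\Omega$ with $\supp(u) \subset \ol{\Omega}_1$, is a subspace of $L^2(\Omega)$; to show it is dense it suffices to show that any $v \in L^2(\Omega)$ annihilating it is zero. So suppose $\int_\Omega v\, u\,dx = 0$ for all such $u$. The idea is to encode the constraint $((-\Delta)^s + q)u = 0$ in $\Omega$ together with the support condition by writing $u = P f$ where $f$ ranges over (exterior) data supported in $\Omega_1 \setminus \ol{\Omega}$ and $P$ is the solution operator; then the annihilation condition should translate, via the adjoint problem (using the DN map identity or the weak formulation), into a statement that a certain solution $w$ of the adjoint equation $((-\Delta)^s + q)w = v$ in $\Omega$, with appropriate exterior conditions, satisfies $(-\Delta)^s w = 0$ in the open set $\Omega_1 \setminus \ol{\Omega}$. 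Since $w$ already vanishes there (from its exterior Dirichlet condition), Theorem~\ref{thm_uniqueness} (the strong uniqueness / unique continuation property of $(-\Delta)^s$) forces $w \equiv 0$, whence $v = ((-\Delta)^s + q)w = 0$ in $\Omega$. This is the crux of the argument and the step where nonlocality is essential: it is exactly the mechanism that fails for the local operator $-\Delta$.

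The main technical care in part (a) is setting up the adjoint/dual problem correctly: one must know that for the exterior data class one is using, the forward problem is well posed (given \eqref{dirichlet_uniqueness} and its consequences for the adjoint operator $(-\Delta)^s + \ol q$, which is of the same type), and that the bilinear form pairing solutions $u$ with the adjoint solution $w$ produces precisely the boundary term that lets one read off $(-\Delta)^s w$ in the exterior region. This is essentially the Alessandrini-type identity that is already used to derive the integral identity mentioned at the start of Section~\ref{sec_tools}, so the ingredients are in place; the work is bookkeeping with the function spaces $H^s(\mathbb{R}^n)$, $\tilde H^s(\Omega_1 \setminus \ol\Omega)$ and their duals.

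For part (b), the strategy is to upgrade the $L^2$-density from (a) to $C^\infty(\ol\Omega)$-density for the rescaled solutions $d(x)^{-s} u|_\Omega$, under the stronger hypotheses $\partial\Omega \in C^\infty$ and $q \in C_c^\infty(\Omega)$. The point is that when $q$ is smooth and compactly supported, solutions $u$ with $\supp(u) \subset \ol\Omega_1$ inherit the boundary regularity typical of the fractional Dirichlet problem: $u$ vanishes to order $s$ at $\partial\Omega$, and $d^{-s} u$ extends to a function in $C^\infty(\ol\Omega)$ (this is the fractional analogue of the Lopatinski-type boundary elliptic regularity; see the fractional Schauder theory). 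Having identified the correct target space, I would again run a duality argument, now pairing against distributions in the dual of $C^\infty(\ol\Omega)$ (compactly supported distributions on $\ol\Omega$) and against the weight $d^{-s}$; the annihilation condition again feeds into an adjoint problem whose solution then vanishes, together with its fractional Laplacian, in $\Omega_1 \setminus \ol\Omega$, so Theorem~\ref{thm_uniqueness} applies once more. A clean way to organize this is: (i) establish, from part (a) plus interior and boundary elliptic estimates for $(-\Delta)^s + q$, that the rescaled solution set is contained in and dense-candidate within $C^\infty(\ol\Omega)$; (ii) run the Hahn--Banach argument in the Fréchet space $C^\infty(\ol\Omega)$.

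The hard part, in my view, is step (b)(i): proving the boundary regularity that $d^{-s}u \in C^\infty(\ol\Omega)$ with the right estimates, and correctly identifying which distributions lie in the dual and how the weight $d^{-s}$ interacts with the adjoint problem. The unique continuation input (Theorem~\ref{thm_uniqueness}) is a black box once set up, and part (a)'s duality is standard once the adjoint problem is pinned down; but the fractional boundary Schauder/Hölder theory needed to make the rescaled density statement precise — and to ensure the dual pairing is the expected one — is the delicate ingredient. If one only wanted part (a), the proof is short; part (b) is where one pays for the smooth category.
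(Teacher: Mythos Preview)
Your proposal is correct and matches the paper's approach essentially line for line: for part (a) the paper runs exactly the Hahn--Banach argument you describe, solving the adjoint problem $((-\Delta)^s+q)\varphi=F$ with $\varphi|_{\Omega_e}=0$, integrating by parts against $P_q f$ to deduce $\varphi|_W=(-\Delta)^s\varphi|_W=0$ on a ball $W\subset\Omega_1\setminus\ol\Omega$, and invoking Theorem~\ref{thm_uniqueness}. For part (b) the paper likewise indicates that the duality argument is rerun with $F$ now a distribution, the key input being higher regularity theory for the fractional Dirichlet problem (equivalently, solvability of the adjoint equation in negative-order spaces), which is precisely the ``hard part'' you identified.
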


Note that corresponding results fail for the Laplacian: if $u \in C^{\infty}_c(\mR^n)$ then both $u$ and $\Delta u$ vanish in a large set but $u$ can be nontrivial, and the set of harmonic functions in $L^2(\Omega)$ is a closed subspace of $L^2(\Omega)$ which is smaller than $L^2(\Omega)$.

Theorem \ref{thm_uniqueness} is classical \cite{Riesz} at least with stronger conditions on $u$, and even the strong unique continuation principle holds \cite{FallFelli, Ruland, Yu}. We note that related results appear in the mathematical physics literature in connection with anti-locality and the Reeh-Schlieder theorem, see \cite{Verch}.

A $C^k$ version of the approximation result, Theorem \ref{thm_approximation}, was first proved in \cite{DipierroSavinValdinoci} when $\Omega = B_1$ and $q=0$. We note that a similar strong approximation property holds for a large class of nonlocal equations including the fractional heat and wave equations $(\partial_t + (-\Delta)^s) u = 0$ and $(\partial_t^2 + (-\Delta)^s) u = 0$, see \cite{DipierroSavinValdinoci_nonlocal, RulandSalo_nonlocal}. This suggests that one could treat inverse problems for these equations as well. 

We will give a proof of Theorem \ref{thm_uniqueness} based on the Caffarelli-Silvestre extension \cite{CaffarelliSilvestre}. This allows us to interpret the quantities $u|_W$ and $(-\Delta)^s u|_W$ as the Cauchy data on $W \times \{0\}$ for the solution $w$ of 
\[
\left\{ \begin{array}{rcl}
\mathrm{div}(x_{n+1}^{1-2s} \nabla w) &\!\!\!=\!\!\!& 0 \text{ in $\mR^{n+1}_+$}, \\[5pt]
w|_{\mR^n \times \{0\}} &\!\!\!=\!\!\!& u.
\end{array} \right.
\]
This reduces the proof of Theorem \ref{thm_uniqueness} to a unique continuation statement for this degenerate elliptic equation.

The approximation property, Theorem \ref{thm_approximation}, follows from the uniqueness result using a Runge type argument \cite{Lax, Malgrange}. The $L^2$ approximation result, which is sufficient for proving Theorem \ref{thm_main}, only requires the basic well-posedness theory for fractional Dirichlet problems. However, for the $C^{\infty}$ approximation one needs to invoke the higher regularity theory for these problems \cite{Hormander_unpublished, Grubb}.

\section{Proofs} \label{sec_proofs}
 
We will first sketch the proof of Theorem \ref{thm_main}, which follows easily from the $L^2$ approximation property in Theorem \ref{thm_approximation}(a).

\begin{proof}[Proof of Theorem \ref{thm_main}]
We begin with an integral identity proved in \cite{GhoshSaloUhlmann}: one has 
\begin{equation} \label{eq_integral_identity}
( (\Lambda_{q_1} - \Lambda_{q_2} ) f_1, f_2 )_{\Omega_e} = \int_{\Omega} (q_1-q_2) u_1 u_2 \,dx
\end{equation}
whenever $u_j \in H^s(\mR^n)$ satisfy $((-\Delta)^s + q_j)u_j = 0$ in $\Omega$ with $u_j|_{\Omega_e} = f_j$. This is basically an integration by parts formula based on the definition of the exterior DN map $\Lambda_q$ (the left hand side is a natural dual pairing in $\Omega_e$).

If $\Lambda_{q_1} f|_{W_2} = \Lambda_{q_2} f|_{W_2}$ for all $f \in C^{\infty}_c(W_1)$, then \eqref{eq_integral_identity} implies that 
\begin{equation} \label{eq_orthogonality_identity}
\int_{\Omega} (q_1-q_2) u_1 u_2 \,dx = 0
\end{equation}
for all $u_j \in H^s(\mR^n)$ solving $((-\Delta)^s + q_j)u_j = 0$ in $\Omega$ with $u_j|_{\Omega_e} \in C^{\infty}_c(W_j)$. It is thus enough to show that the products $\{ u_1 u_2|_{\Omega} \}$ of such solutions form a complete set in $L^1(\Omega)$. This is a consequence of Theorem \ref{thm_approximation}(a): one can for instance fix any $v \in L^2(\Omega)$ and choose solutions $u_j^{(k)}$ satisfying $u_j^{(k)}|_{\Omega_e} \in C^{\infty}_c(W_j)$ (by the proof of Theorem \ref{thm_approximation}(a) below) such that 
\begin{gather*}
u_1^{(k)} \to v \text{ in $L^2(\Omega)$}, \\
u_2^{(k)} \to 1 \text{ in $L^2(\Omega)$},
\end{gather*}
as $k \to \infty$. Inserting these solutions in \eqref{eq_orthogonality_identity} and letting $k \to \infty$ gives 
\[
\int_{\Omega} (q_1-q_2) v \,dx = 0.
\]
Since $v \in L^2(\Omega)$ was arbitrary, it follows that $q_1 = q_2$.
\end{proof}

It is a natural question to try to relax the assumption $q_j \in L^{\infty}(\Omega)$. In fact, this was done in \cite{RulandSalo_stability} using a version of Theorem \ref{thm_approximation}(a) that gives an approximation result in $H^s(\Omega)$ rather than in $L^2(\Omega)$.

We will next prove the approximation result, Theorem \ref{thm_approximation}, using the uniqueness result (Theorem \ref{thm_uniqueness}). The proof is a standard functional analysis argument, which essentially boils down to computing the formal adjoint of the Poisson operator $P_q$.

\begin{proof}[Proof of Theorem \ref{thm_approximation}]
We give the proof of part (a) in the case where $\Omega$ is a bounded Lipschitz domain (for the case of general open sets see \cite{GhoshSaloUhlmann}).

Let $W$ be a ball such that $\overline{W} \subset \Omega_1 \setminus \overline{\Omega}$.
Let $P_q: C^{\infty}_c(W) \to H^s(\mR^n)$ be the Poisson operator that maps an exterior Dirichlet value $f \in C^{\infty}_c(W)$ to the solution $u \in H^s(\mR^n)$ of $((-\Delta)^s+q) u = 0$ in $\Omega$ satisfying $u|_{\Omega_e} = f$. Define the space 
\[
\mathcal{R} = \{ P_q f|_{\Omega} \,;\, f \in C^{\infty}_c(W) \}.
\]
The result will follow if we can show that $\mathcal{R}$ is a dense subspace of $L^2(\Omega)$.

By the Hahn-Banach theorem, it is enough to prove that any $F \in L^2(\Omega)$ that satisfies $(F, P_q f|_{\Omega})_{L^2(\Omega)} = 0$ for all $f \in C^{\infty}_c(W)$ must satisfy $F \equiv 0$. To do this, let $\varphi \in H^s(\mR^n)$ solve 
\begin{equation} \label{eq_varphi_equation}
((-\Delta)^s + q) \varphi = F \text{ in $\Omega$}, \qquad \varphi|_{\Omega_e} = 0.
\end{equation}
Also extend $q$ by zero from $\Omega$ to $\mR^n$. Then, for any $f \in C^{\infty}_c(W)$, we have 
\begin{align*}
0 &= (F, P_q f|_{\Omega})_{L^2(\Omega)} = (((-\Delta)^s + q) \varphi|_{\Omega}, P_q f - f|_{\Omega})_{L^2(\Omega)}. 
\end{align*}

We may extend the last pairing to $\mR^n$: since $P_q f - f$ is a function in $H^s(\mR^n)$ and is supported in $\overline{\Omega}$, there are $\psi_j \in C^{\infty}_c(\Omega)$ with $\psi_j \to P_q f - f$ in $H^s(\mR^n)$ (see e.g.\ \cite[Theorem 3.29]{McLean}). Thus 
\begin{align*}
0 &= \lim_{j \to \infty} (((-\Delta)^s + q) \varphi, \psi_j)_{L^2(\mR^n)} \\
 &= (((-\Delta)^s + q) \varphi, P_q f - f)_{H^{-s}(\mR^n) \times H^s(\mR^n)}.
\end{align*}
Also $\varphi \in H^s(\mR^n)$ is supported in $\overline{\Omega}$, and we may integrate by parts to show that $ (((-\Delta)^s + q) \varphi, P_q f)_{H^{-s}(\mR^n) \times H^s(\mR^n)} = 0$ since $P_q f$ is a solution in $\Omega$.

It follows that 
\[
0 = -((-\Delta)^s \varphi, f)_{H^{-s}(\mR^n) \times H^s(\mR^n)}
\]
for all $f \in C^{\infty}_c(W)$. But now $\varphi \in H^s(\mR^n)$ satisfies 
\[
\varphi|_W = (-\Delta)^s \varphi|_W = 0.
\]
Theorem \ref{thm_uniqueness} implies that $\varphi \equiv 0$, and consequently also $F \equiv 0$. This proves part (a).

To show part (b), i.e.\ $C^{\infty}$ approximation, the function $F$ in the above proof becomes a very irregular distribution. Then one essentially needs to solve \eqref{eq_varphi_equation} in negative order Sobolev spaces associated with the fractional equation. By duality, this can be reduced to the higher regularity theory for fractional exterior Dirichlet problems \cite{Hormander_unpublished, Grubb}. We refer to \cite{GhoshSaloUhlmann} for the details.
\end{proof}

We mention that \cite{RulandSalo_stability} proves a quantitative version of Theorem \ref{thm_approximation}(a): given $v \in L^2(\Omega)$ and $\eps > 0$, one estimates the size of a \emph{control function} $f$ in $\Omega_e$ such that the corresponding solution $u$ satisfies $\norm{u|_{\Omega} - v}_{L^2(\Omega)} \leq \eps$. This is related to the notion of \emph{cost of (approximate) controllability} in the control theory literature. The proof of the quantitative approximation theorem is based on a quantitative version of the uniqueness result, Theorem \ref{thm_uniqueness}, and a functional analysis argument as in \cite{Robbiano}. A similar argument was used to quantify the classical Runge approximation property for second order elliptic equations in \cite{RulandSalo_Runge}, and also to quantify the approximation property for more general nonlocal equations such as the fractional heat and wave equation \cite{RulandSalo_nonlocal}.

Finally, let us sketch a proof of the uniqueness result, Theorem \ref{thm_uniqueness}, in the spirit of the quantitative proof given in \cite{RulandSalo_stability}. As mentioned above, this is based on the Caffarelli-Silvestre extension \cite{CaffarelliSilvestre}: for any $u \in H^s(\mR^n)$, one can realize $(-\Delta)^s u$ as the limit (with convergence in $H^{-s}(\mR^n)$) 
\[
(-\Delta)^s u = c_s \lim_{x_{n+1} \to 0^+} x_{n+1}^{1-2s} \partial_{n+1} w,
\]
where $w$ solves the Dirichlet problem  
\[
\left\{ \begin{array}{rcl}
\mathrm{div}(x_{n+1}^{1-2s} \nabla w) &\!\!\!=\!\!\!& 0 \text{ in $\mR^{n+1}_+$}, \\[5pt]
w|_{\mR^n \times \{0\}} &\!\!\!=\!\!\!& u.
\end{array} \right.
\]
If $s = 1/2$ this is just the Dirichlet problem for the Laplace equation in $\mR^{n+1}_+$, and $w$ is the harmonic extension of $u$. For a general $s$ with $0 < s < 1$, the weight $x_{n+1}^{1-2s}$ is a Muckenhoupt $A_2$ weight, and the equation is a degenerate elliptic equation that has been studied in \cite{FabesKenigSerapioni, FabesJerisonKenig, CabreSire}.

The main point is that Theorem \ref{thm_uniqueness}, which is a uniqueness statement for the nonlocal operator $(-\Delta)^s$, becomes a \emph{unique continuation statement} for solutions of the local equation $\mathrm{div}(x_{n+1}^{1-2s} \nabla w) = 0$. To prove Theorem \ref{thm_uniqueness}, it is enough to show that if the Cauchy data of $w$ vanish on $W \times \{0\}$ (meaning that $w|_{W \times \{0\}} = \lim_{x_{n+1} \to 0^+} x_{n+1}^{1-2s} \partial_{n+1} w|_{W \times \{0\}} = 0$), then the solution $w$ is identically zero in $\mR^{n+1}_+$. This of course implies that $u \equiv 0$.

The work \cite{RulandSalo_stability} gave a quantitative unique continuation statement of this type for the degenerate elliptic equation $\mathrm{div}(x_{n+1}^{1-2s} \nabla w) = 0$. This was based on Carleman estimates and propagation of smallness, or Lebeau-Robbiano interpolation inequality, arguments (see \cite{AlessandriniRondiRossetVessella, LebeauRobbiano, LeRousseauLebeau}).

The proof proceeds in three steps, which are sketched in the following (see \cite{RulandSalo_stability} for the details). The argument is also illustrated in Figure \ref{fig:prop:small}, which is from \cite{RulandSalo_stability}.

\begin{figure}[t]
\includegraphics[width=\textwidth]{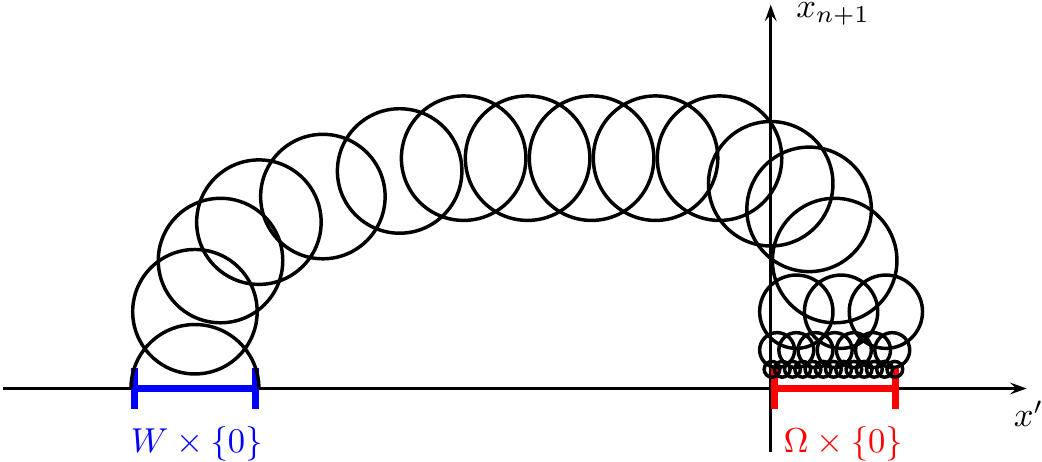}
\caption{An illustration of the propagation of smallness argument.}
\label{fig:prop:small}
\end{figure}

\begin{enumerate}
\item 
If the Cauchy data of $w$ is small in $W \times \{0\}$, then $w$ is small in $W \times (0,1)$. This is proved using a boundary interpolation inequality, which in turn follows from a suitable Carleman inequality with boundary terms.
\item 
If $w$ is small in $W \times (0,1)$, then $w$ is small in $\Omega \times (h,1)$ where $h > 0$ will be specified later. To show this, one propagates the smallness of $w$ in the interior by a chain of balls argument and three balls inequalities, which can again be obtained from a suitable Carleman inequality. Since the balls in the argument have to lie in $\mR^{n+1}_+$, the balls at height $h$ should have radius $\sim h$, and thus one needs $\sim \abs{\log h}$ balls in the chain.
\item 
If $w$ is small in $\Omega \times (h,1)$, then $w$ is small on $\Omega \times \{0\}$. To show this, one first uses a localized trace theorem to estimate the boundary value of $w$ on $\Omega \times \{0\}$ in terms of the size of $w$ in $\Omega \times (0,1)$. One has an estimate in $\Omega \times (h,1)$ from step (2). The $L^2$ norm of $w$ in $\Omega \times (0,h)$ is bounded by a higher $L^p$ norm times $h^{\alpha}$ for some $\alpha > 0$ by the H\"older inequality, and the higher $L^p$ norm of $w$ can be bounded by a $L^2$ norm of $\nabla w$ using a Sobolev embedding for the degenerate equation. Optimizing over $h > 0$ gives the final estimate for $w$ in $\Omega \times \{0\}$ in terms of the Cauchy data on $W \times \{0\}$ and an a priori bound for a weighted $H^1$ norm of $w$ in $\mR^{n+1}_+$.
\end{enumerate}


\bibliographystyle{alpha}

\end{document}